\newtheorem{theorem}{Theorem}
\newtheorem{proposition}[theorem]{Proposition}
\newtheorem{remark}[theorem]{Remark}
\newcommand{\vphi}{\varphi}
\numberwithin{equation}{section}
\numberwithin{theorem}{section}
\numberwithin{table}{section}
\numberwithin{table}{section}
\begin{document}
\bibliographystyle{amsalpha} 
\title[Sch\"odinger operators on compact manifolds]{The lowest eigenvalue of Schr\"odinger operators on compact manifolds}
\author{Michael G. Dabkowski}
\address{Dept. of Mathematics, Lawrence Technological University, Southfield, MI, 48075}
\email{mdabkowsk@ltu.edu}
\author{Michael T. Lock}
\address{Dept. of Mathematics, University of Texas, Austin, 
TX, 78712}
\email{mlock@math.utexas.edu}
\thanks{The second author was partially supported by NSF Grant DMS-1148490}
\date{May 15, 2016}
\begin{abstract}
The lowest eigenvalue of the Schr\"odinger operator $-\Delta+\mathcal{V}$ on a compact Riemannian manifold without boundary is studied.  We focus on the particularly subtle case of a sign changing potential with positive average.

\end{abstract}
\maketitle
%%%%%%%%%%%%%%%%%%%%%%%%%%%%%%%%%%%%%%%%%%%%%

\section{Introduction}
\label{Schrodinger_intro}

The time independent Schr\"odinger equation, ubiquitous in the study of quantum mechanics and partial differential equations, is given in the Euclidean context by
\begin{align}  
-\Delta_{Euc} \psi + V\psi = E \psi, \label{RnSchrodinger}
\end{align}
where $\Delta_{Euc}$ is the ordinary Euclidean Laplacian, $V$ is a function called the potential, and $E$ is a scalar representing the energy level. Equation \eqref{RnSchrodinger} is realized as an eigenvalue problem, where 
each eigenvalue corresponds to an energy level of a particle.  The lowest energy level {\em(ground state)} plays an essential role in the study of the~system.

This has a natural version in the Riemannian setting, and our interest is in the case that $(M,g)$ is a compact Riemannian manifold, which we always assume to be without boundary.  Indeed, we can consider the Schr\"odinger equation
\begin{align}  
\label{MSchrodinger}
-\Delta_g \varphi + \mathcal{V} \varphi = \mathcal{E} \varphi, 
\end{align}
where $\mathcal{V}$ is a smooth function on $M$, and our sign convention is such that the spectrum of $-\Delta_g$ is non-negative, see \eqref{laplacian}.  The spectrum of $-\Delta_g + \mathcal{V}$ is discrete and bounded from below.  Our interest here is in the sign of the lowest eigenvalue of this operator, in particular, the conditions under which the ground state has non-negative energy.

On a compact manifold, minimizing the Rayleigh quotient corresponding to this Schr\"odinger operator yields its lowest eigenvalue, see \eqref{rayleigh}.  Accordingly, it is elementary to determine the sign of the lowest eigenvalue in every situation except the case when the potential
%{\em the potential $\mathcal{V}$ changes sign on the manifold and has positive average}.  On a compact manifold, this case is quite subtle and will be the focus of our work.  Therefore, for the remainder of this paper we assume that the potential $\mathcal{V}$ satisfies the conditions:
\begin{align}
\begin{split}
\label{potential_condition}
&\text{(i)} \phantom{ii}\text{$\mathcal{V}$ changes sign on the manifold}\\
&\text{(ii)} \phantom{i}\int_M \mathcal{V}>0.
\end{split}
\end{align}
On a compact manifold, this case is quite subtle and will be the focus of our work.  
%For the remainder of this paper we assume that the potential $\mathcal{V}$ satisfies conditions \eqref{potential_condition}.

Interestingly, none of the existing results are applicable to the lowest eigenvalue problem in this setting.  For instance, the classical results in the non-compact setting which bound the number of negative eigenvalues less than a certain threshold from above, see \cite{Cwikel,Lieb_76,Rozenbljum_76,Reed-Simon,Lieb_80,Li-Yau,Rozenblum-Solomyak, Molchanov-Vainberg} and references therein, fail on compact manifolds as they rely upon estimates for the heat kernel which do not hold in this setting.
In fact, there are simple counter examples to these results in the compact setting.  Somewhat surprisingly, there is a dearth of work on eigenvalue problems for Schr\"odinger operators on compact manifolds.  Most notable, are the results \cite{Grigor'yan-Netrusov-Yau,Grigor'yan-Nadirashvili-Sire}, which provide a lower bound on the number of negative eigenvalues of $-\Delta_g+\mathcal{V}$.  However, while this bound is an incredible result, it is not sharp.  In particular, it provides no information when $\mathcal{V}$ satisfies conditions \eqref{potential_condition}, which is the focus of our work.  (Also, see \cite{Dolbeault-Esteban-Laptev-Loss} for an interesting result in the case of a positive potential.)
%However, while this bound is an incredible result, it is not sharp.  In particular, it provides no information when $\mathcal{V}$ satisfies conditions \eqref{potential_condition}, which is the focus of our work.  
There has been no progress made in understanding if the lowest eigenvalue can ever be strictly positive in this realm of potentials.

Adopting the notation $L_{\mathcal{V}} = -\Delta_g + \mathcal{V}$, and assuming $\mathcal{V}$ satisfies \eqref{potential_condition}, we will be concerned with 
the lowest eigenvalue of this operator.  The ground state solution, or the eigenfunction corresponding to the lowest eigenvalue, is unique up to scale. Furthermore, since the manifold is compact, by the orthogonality of eigenfunctions, it is the only one with a sign and, without loss of generality, can be assumed to be positive.  Therefore, this is equivalent to the study of positive solutions
to \eqref{MSchrodinger}.

%For now, we restrict our attention to the specific question of when zero is the lowest eigenvalue or, equivalently, that of existence for a positive solution to $L_{\mathcal{V}}(\vphi)=0$.  Results concerning the sign of the lowest eigenvalue in general will follow from the study of this question.
 
Given the rather minimal assumptions on $\mathcal{V}$, it is unlikely that an arbitrary non-negative $\lambda$ will be in the spectrum of $L_{\mathcal{V}}$, much less the lowest eigenvalue of this operator. 
%Given the rather minimal assumptions on $\mathcal{V}$, it is unlikely that zero is in the spectrum of $L_{\mathcal{V}}$, much less the lowest eigenvalue of this operator.  
Suppose, however, we were to fix a potential $\mathcal{V}$ while scaling the manifold, say to $(M, tg)$ for $t>0$.  Could it then be ensured, that there is some $t>0$ for which there exists a positive solution to $-\Delta_{tg}\vphi+\mathcal{V}\vphi=\lambda \vphi$?  Since $\Delta_{tg}= \frac{1}{t}\Delta_g$, this is equivalent to studying the equation
\begin{align}
\label{f(t)=t}
-\Delta_g \vphi+t\mathcal{V}\vphi=\lambda\vphi,
\end{align}
which is nothing more than the original problem with the potential scaled by $t$. While it is impermissible to scale the metric by $t=0$, it is valid to let $t=0$ in \eqref{f(t)=t}. 
This idea has a natural extension to a positive scaling of the potential $\mathcal{V}$ by a more general function of $t$. More precisely, we let $f:[0,\infty)\rightarrow [0,\infty)$ be any continuous function that satisfies the following three conditions
\begin{align}
\begin{split}
\label{f_conditions}
&\text{(i)} \phantom{iii}f(0)=0\\
&\text{(ii)} \phantom{ii}f(t)>0,\phantom{i}\text{for }t>0\\
&\text{(iii)} \phantom{i}\lim_{t\rightarrow\infty}f(t)=\infty.
\end{split}
\end{align}
It is important to note that using such an $f(t)$ as a multiplier on the potential preserves the conditions of \eqref{potential_condition} for all $t>0$.

Concisely, on a compact Riemannian manifold, of any real dimension, we consider the one-parameter family of Schr\"odinger operators given by
\begin{align}
\label{operators}
L_{f(t)\mathcal{V}}=-\Delta_g+f(t)\mathcal{V},
\end{align}
for $0\leq t<\infty$, where $f(t)$ and $\mathcal{V}$ respectively satisfy conditions \eqref{f_conditions} and \eqref{potential_condition}.  Recall that this is just a generalization of scaling the potential or the manifold and, for simplicity's sake, the reader can view $f(t)$ below as merely the parameter $t$.
For each value of $t$, this operator has discrete spectrum bounded from below, and each eigenvalue of $L_{f(t)\mathcal{V}}$ is a continuous function in the parameter $t$, see Section \ref{Schrodinger_background}.

It is a surprisingly subtle question of whether there exists a range over which the lowest eigenvalue is strictly positive.  This question is answered, in the affirmative, in Theorem \ref{t-schrodinger_thm_II}, and furthermore a lower bound on the size of this range is given.  It is frequently of interest, in both mathematics and physics, to understand when the ground state has zero energy.  While understanding the positive question is quite complicated, it is easy to tune the parameter to show that there is a range over which the lowest eigenvalue will be negative.  This is proved in Section \ref{zero_lambda} below.  Coupling these results together we are able to guarantee the existence of some $t^*\in(0,\infty)$ for which the lowest eigenvalue of $L_{f(t^*)\mathcal{V}}$ is zero.  This result, along with one concerning uniqueness, is provided in Theorem \ref{zero_eigenvalue}.
%Given certain conditions on the scaling function $f(t)$, we will establish a uniqueness result.  We state these as Corollary \ref{Schrodinger_f_cor} and Corollary \ref{Schrodinger_pn_cor} respectively.
It is interesting to observe that these results hold for any dimension, as opposed to many classical results in the Euclidean setting for which complications arise in $2$-dimensions. 

\begin{theorem}
\label{t-schrodinger_thm_II}
On a compact manifold $(M,g)$, consider the one-parameter family of Schr\"odinger operators
\begin{align*}
L_{f(t)\mathcal{V}}=-\Delta_g+f(t)\mathcal{V},
\end{align*}
where $\mathcal{V}$ and $f(t)$ respectively satisfy conditions \eqref{potential_condition} and \eqref{f_conditions}.  Then, the lowest eigenvalue of $L_{f(t)\mathcal{V}}$
is strictly positive for all $t\in(0,\infty)$ for which
\begin{align*}
f(t)\leq\frac{\int_M\mathcal{V}}{P||\mathcal{V}||_{\infty}(4Vol(M)||\mathcal{V}||_{\infty}+\int_M\mathcal{V})},
\end{align*}
where $P>0$ is the Poincar\'e constant of the compact manifold.
\end{theorem}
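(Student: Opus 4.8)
The plan is to argue directly from the variational characterization \eqref{rayleigh}: the lowest eigenvalue of $L_{f(t)\mathcal{V}}$ equals
\[
\inf_{u\not\equiv 0}\ \frac{\int_M|\nabla u|^2+f(t)\int_M\mathcal{V}u^2}{\int_M u^2},
\]
and since $M$ is compact this infimum is attained by the (positive) ground state, so it suffices to show that the numerator $Q(u):=\int_M|\nabla u|^2+f(t)\int_M\mathcal{V}u^2$ is strictly positive for every $u\not\equiv 0$ whenever $f(t)$ obeys the stated bound. The only real difficulty is that $\int_M\mathcal{V}u^2$ can be negative because $\mathcal{V}$ changes sign, so the gradient term must be made to dominate it.

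First I would write $u=\bar u+w$ with $\bar u=\frac{1}{Vol(M)}\int_M u$ the mean value and $\int_M w=0$; then $\nabla u=\nabla w$ and the Poincar\'e inequality gives $\int_M w^2\le P\int_M|\nabla u|^2$. If $u$ is constant then $Q(u)=\bar u^2\int_M\mathcal{V}>0$ by \eqref{potential_condition}(ii), so assume $\int_M|\nabla u|^2>0$. Expanding $\int_M\mathcal{V}u^2=\bar u^2\int_M\mathcal{V}+2\bar u\int_M\mathcal{V}w+\int_M\mathcal{V}w^2$, I would bound the last term by $|\int_M\mathcal{V}w^2|\le ||\mathcal{V}||_\infty\int_M w^2\le P||\mathcal{V}||_\infty\int_M|\nabla u|^2$, and the cross term, via Cauchy--Schwarz and then Poincar\'e, by $|2\bar u\int_M\mathcal{V}w|\le 2|\bar u|\,||\mathcal{V}||_\infty\,Vol(M)^{1/2}P^{1/2}\big(\int_M|\nabla u|^2\big)^{1/2}$, which I then split with Young's inequality $2AB\le \epsilon B^2+\epsilon^{-1}A^2$, taking $B=\big(\int_M|\nabla u|^2\big)^{1/2}$. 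This produces
\[
Q(u)\ \ge\ \Big(1-f(t)\big(\epsilon+P||\mathcal{V}||_\infty\big)\Big)\int_M|\nabla u|^2\ +\ f(t)\,\bar u^2\Big(\int_M\mathcal{V}-\tfrac{P\,Vol(M)||\mathcal{V}||_\infty^2}{\epsilon}\Big).
\]

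The key step is then the choice of the Young parameter: taking $\epsilon=4P\,Vol(M)||\mathcal{V}||_\infty^2/\int_M\mathcal{V}$ turns the $\bar u^2$-coefficient into $\tfrac34\int_M\mathcal{V}>0$ and the gradient coefficient into $1-f(t)\,P||\mathcal{V}||_\infty\big(4Vol(M)||\mathcal{V}||_\infty+\int_M\mathcal{V}\big)/\int_M\mathcal{V}$, which is $\ge 0$ exactly under the hypothesis on $f(t)$; hence $Q(u)\ge 0$. For strict positivity: if $\bar u\ne 0$ the second term is already strictly positive; if $\bar u=0$ there is no cross term, so $Q(u)\ge\big(1-f(t)P||\mathcal{V}||_\infty\big)\int_M|\nabla u|^2$, and since the stated bound is strictly less than $1/(P||\mathcal{V}||_\infty)$ (because $4Vol(M)||\mathcal{V}||_\infty>0$) while $\int_M|\nabla u|^2>0$ in this case, $Q(u)>0$. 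I expect the main obstacle to be pinning down the cross term $2\bar u\int_M\mathcal{V}w$ — it is what forces both $Vol(M)$ and the quadratic dependence on $||\mathcal{V}||_\infty$ into the bound — and calibrating $\epsilon$ so that enough slack survives to yield strict rather than merely non-negative positivity at the endpoint $f(t)=\int_M\mathcal{V}/\big(P||\mathcal{V}||_\infty(4Vol(M)||\mathcal{V}||_\infty+\int_M\mathcal{V})\big)$.
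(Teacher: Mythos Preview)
Your argument is correct and reaches exactly the constant in the statement. The skeleton is the same as the paper's: decompose $u=\bar u+w$ into mean plus mean-zero part, apply Poincar\'e to control $\int_M w^2$, and bound the cross term $2\bar u\int_M\mathcal{V}w$ by Cauchy--Schwarz/H\"older. Where you diverge is in the balancing step. The paper normalizes $\|\varphi\|_2=1$, writes everything in terms of the single parameter $C_\varphi\in[0,1]$, and then performs a two-range case analysis: for $C_\varphi$ near $1$ the term $C_\varphi^2 f(t)\int_M\mathcal{V}$ absorbs the cross term (the threshold being $C_\varphi^2\ge 4/\big((\int_M\mathcal{V}/\|\mathcal{V}\|_\infty)^2+4\big)$), while for $C_\varphi$ below that threshold one needs $f(t)$ small enough that the Poincar\'e term alone dominates. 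You instead split the cross term once and for all with Young's inequality $2AB\le\epsilon B^2+\epsilon^{-1}A^2$ and then tune $\epsilon=4P\,Vol(M)\|\mathcal{V}\|_\infty^2/\int_M\mathcal{V}$ so that both resulting coefficients are nonnegative simultaneously. This is cleaner: it avoids the case split entirely, produces the explicit constant in one line, and your endpoint discussion (revisiting $\bar u=0$ without Young to recover strict positivity when $f(t)$ equals the bound) is exactly what is needed. The paper's route, on the other hand, isolates the $t$-independent threshold in $C_\varphi$ as an ``invariant'' of the family, which is a nice structural observation your packaging hides.
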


This not only proves the existence of a positive lowest eigenvalue, but also an estimate on the size of the regime over which it is positive.  In particular, a very nice picture is presented when $f(t)=t$ where the lowest eigenvalue is guaranteed to be positive for $t\in \Big(0,\frac{\int_M\mathcal{V}}{P||\mathcal{V}||_{\infty}(4Vol(M)||\mathcal{V}||_{\infty}+\int_M\mathcal{V})}\Big]$.

Also, observe that Theorem \ref{t-schrodinger_thm_II} provides an estimate for the operator $-\Delta_g+\mathcal{V}$, where $\mathcal{V}$ satisfies  \eqref{potential_condition} is a fixed potential and the scaling parameter is absent.  Specifically, the lowest eigenvalue is positive if 
\begin{align}
\frac{P||\mathcal{V}||_{\infty}(4Vol(M)||\mathcal{V}||_{\infty}+\int_M \mathcal{V)}}{\int_M \mathcal{V}}\leq 1.
\end{align}

Given Theorem \ref{t-schrodinger_thm_II}, the question of existence for a zero lowest eigenvalue is reduced to showing that there exists a regime over which the lowest eigenvalue is negative.  We state the following result detailing the regimes over which the lowest eigenvalue is positive, zero and negative.

%We will tune the parameter to show that in some range of $t$ the lowest eigenvalue is positive and in another that the lowest eigenvalue is negative, which will therefore ensure that zero is the lowest eigenvalue for some value of $t$. Furthermore, given certain conditions on the function $f(t)$, we will establish a uniqueness result.  It is interesting to note that our results hold for any dimension, as opposed to the results concerning upper bounds on the number of negative eigenvalues in Euclidean space referenced above for which complications arise in $2$-dimensions.  While both Theorem \ref{Schrodinger_f_thm} and Theorem~\ref{Schrodinger_pn_thm} below stem from the  work of Section \ref{Schrodinger_proof}, they are stated separately for the sake of clarity.  

\begin{theorem}
\label{zero_eigenvalue}
On a compact manifold $(M,g)$, consider the one-parameter family of Schr\"odinger operators $L_{f(t)\mathcal{V}}$ as in Theorem \ref{t-schrodinger_thm_II}.%, and let $\lambda_0(t)$ denote their lowest eigenvalues.
\begin{enumerate}
\item
\label{zero_existence} There exists a $t^*>0$ for which the lowest eigenvalue is zero. 
\item 
\label{part2}
There are non-empty open subsets, $I^+$ and $I^{-}$, of $t\in\big(0, \infty)$ upon which the lowest eigenvalues are respectively positive and negative.
\item
\label{part3}
If $f(t)$ is a strictly monotone increasing function, then there is a unique $t^*>0$ for which the lowest eigenvalue is zero, and
\begin{align*}
 I^{+}&=(0,t^*)\\
 I^{-}&=(t^*,\infty).
\end{align*}
\end{enumerate}
\end{theorem}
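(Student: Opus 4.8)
The plan is to run everything through the variational characterization of the lowest eigenvalue $\lambda(t)$ of $L_{f(t)\mathcal{V}}$, namely $\lambda(t)=\inf_{\varphi\not\equiv 0}\big(\int_M|\nabla\varphi|^2+f(t)\int_M\mathcal{V}\varphi^2\big)\big/\int_M\varphi^2$, together with the continuity of $\lambda$ in $t$ recorded in Section \ref{Schrodinger_background}. Since $f(0)=0$ we have $\lambda(0)=0$, attained on constants. Theorem \ref{t-schrodinger_thm_II} already provides the positive side: for all sufficiently small $t>0$ the value $f(t)$ lies below the stated threshold, so $\lambda(t)>0$ there and $I^{+}=\{t>0:\lambda(t)>0\}\neq\varnothing$. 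The only genuinely new ingredient needed is the existence of a regime on which $\lambda$ is negative.

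For that I would use a test function localized where $\mathcal{V}<0$. By \eqref{potential_condition}(i) the open set $U=\{\mathcal{V}<0\}$ is non-empty; fix $\varphi\in C_c^{\infty}(U)$ with $\varphi\not\equiv 0$, so $\int_M\mathcal{V}\varphi^2<0$. Inserting $\varphi$ in the Rayleigh quotient gives $\lambda(t)\le\big(\int_M|\nabla\varphi|^2+f(t)\int_M\mathcal{V}\varphi^2\big)\big/\int_M\varphi^2$, and by \eqref{f_conditions}(iii) the right-hand side tends to $-\infty$ as $t\to\infty$; hence $\lambda(t)<0$ for all large $t$, so $I^{-}=\{t>0:\lambda(t)<0\}\neq\varnothing$. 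Parts \eqref{zero_existence} and \eqref{part2} then follow formally: $I^{+}$ and $I^{-}$ are open because $\lambda$ is continuous, and both sit inside $(0,\infty)$ because $\lambda(0)=0$; picking $t_1\in I^{+}$ and $t_2\in I^{-}$ and applying the intermediate value theorem to $\lambda$ on the interval between them yields a $t^{*}$ strictly between them, in particular $t^{*}>0$, with $\lambda(t^{*})=0$.

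For part \eqref{part3} the key point is a persistence-of-negativity claim: if $f$ is strictly increasing and $\lambda(t_0)\le 0$ for some $t_0>0$, then $\lambda(t)<0$ for every $t>t_0$. To prove it, let $\varphi_0$ be the normalized ground state at $t_0$, so $\int_M|\nabla\varphi_0|^2+f(t_0)\int_M\mathcal{V}\varphi_0^2=\lambda(t_0)\le 0$. If $\int_M\mathcal{V}\varphi_0^2\ge 0$, then (using $f(t_0)>0$) we get $\int_M|\nabla\varphi_0|^2\le\lambda(t_0)\le 0$, forcing $\varphi_0$ to be constant; but then $\int_M\mathcal{V}\varphi_0^2$ is a positive multiple of $\int_M\mathcal{V}$, which is $>0$ by \eqref{potential_condition}(ii), and $\lambda(t_0)=f(t_0)\int_M\mathcal{V}\varphi_0^2>0$, a contradiction. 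Hence $\int_M\mathcal{V}\varphi_0^2<0$, and for $t>t_0$ strict monotonicity of $f$ gives $\int_M|\nabla\varphi_0|^2+f(t)\int_M\mathcal{V}\varphi_0^2<\lambda(t_0)\le 0$, so $\lambda(t)<0$. Granting this, set $t^{*}=\inf\{t>0:\lambda(t)<0\}$; it is positive by Theorem \ref{t-schrodinger_thm_II} (since $\lambda>0$ on a neighborhood of $0$ in $(0,\infty)$) and finite by the construction above. The claim gives $\lambda<0$ on $(t^{*},\infty)$, and also $\lambda>0$ on $(0,t^{*})$, because $\lambda(t)\le 0$ for some $t\in(0,t^{*})$ would force $\lambda<0$ somewhere in $(t,t^{*})$, contradicting the definition of $t^{*}$; continuity yields $\lambda(t^{*})=0$, uniqueness of the zero in $(0,\infty)$ is immediate, and $I^{+}=(0,t^{*})$, $I^{-}=(t^{*},\infty)$.

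I do not expect a serious obstacle: the hard analysis lives in Theorem \ref{t-schrodinger_thm_II}, which may be assumed, and the negative regime is a one-line test-function estimate (this is the content promised in Section \ref{zero_lambda}). The only step needing care is the persistence claim for part \eqref{part3} — specifically the dichotomy in which the ``bad'' case $\int_M\mathcal{V}\varphi_0^2\ge 0$ collapses via \eqref{potential_condition}(ii) — together with the bookkeeping that $\lambda$ extends continuously to $t=0$ with $\lambda(0)=0$, so that $I^{\pm}$ genuinely lie in the open half-line. (An alternative route to part \eqref{part3} is to note that $s\mapsto\lambda$ of $-\Delta_g+s\mathcal{V}$ is concave, being an infimum of affine functions of $s$, which likewise pins down the sign pattern; the persistence argument seems cleaner and avoids discussing a possible flat segment.)
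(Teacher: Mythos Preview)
Your proposal is correct and follows essentially the same approach as the paper: the negative regime via a test function supported in $\{\mathcal{V}<0\}$, the zero via continuity and the intermediate value theorem, and the uniqueness via a persistence-of-negativity argument using the ground state at one time as a test function at later times. Your version of part \eqref{part3} is slightly more careful (handling the hypothetical $\int_M\mathcal{V}\varphi_0^2\ge0$ case explicitly, and organizing the conclusion around $t^*=\inf\{t>0:\lambda(t)<0\}$), and the concavity remark is a nice alternative the paper does not mention, but none of this is a genuinely different route.
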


\begin{remark}
{\em
For any such family of operators, $L_{f(t)\mathcal{V}}$, there exists constants $C^+,C^->0$ so that $(0,C^+)\subset I^+$ and $(C^-,\infty)\subset I^-$.  In other words, for $t$ ``near enough'' to $0$ the lowest eigenvalue will always be positive, and for $t$ ``near enough'' to $\infty$ the lowest eigenvalue will always be negative.  Furthermore, note that in the case when $f$ is an increasing function, $I^{+}$ and $I^{-}$ are intervals constituted by values of $t<t^*$ and $t>t^*$ respectively.  Notice that Theorem \ref{t-schrodinger_thm_II} gives a lower bound on the interval $I^+$ in the case that $f(t)=t$.
}
\end{remark}

\subsection{Acknowledgements}
The authors would like to thank Joseph Conlon and Pablo Stinga for many useful conversations, as well as Herschel Viminah and Avram Mahnool for their insight into the physical aspects of this problem.

%%%%%%%%%%%%%%%%%%%%%%%%%%%%%%%%%%%%

\section{Background and preliminaries}
\label{Schrodinger_background}
Let $(M,g)$ be a compact Riemannian manifold upon which we are investigating the Schr\"odinger equation 
\begin{align}
L_{f(t)\mathcal{V}}(\vphi)=-\Delta_g\vphi+f(t)\mathcal{V}\vphi=0,
\end{align}
where $f(t)$ and $\mathcal{V}$ satisfy \eqref{f_conditions} and \eqref{potential_condition} respectively.  Here $\Delta_g$ is the Laplace-Beltrami operator, which can be written locally as
\begin{align}
\label{laplacian}
\Delta_g=\frac{1}{\sqrt{\det(g)}}\partial_i\Big(\sqrt{\det(g)}g^{ij}\partial_j\Big).
\end{align}
Our sign convention is such that the spectrum of $-\Delta_g$ is non-negative.
For any fixed $t$, the potential $f(t) \mathcal{V}$ is continuous on $M$ and hence bounded. From this we conclude that the spectrum of $L_{f(t)\mathcal{V}}$ is discrete and bounded from below. List the eigenvalues in ascending order as functions of $t$, 
\begin{align}
\lambda_0(t)<\lambda_1(t)<\lambda_2(t)<\lambda_3(t)<\cdots,
\end{align}
and the corresponding eigenfunctions by $\vphi_{\lambda_i(t)}$, for $i\geq0$. It may be the case that a given eigenspace has dimension greater than one, in which case we will simplify select one element from the eigenspace. This will not cause us any problem as we are principally concerned with the lowest eigenfunction. 

The lowest eigenvalue of $L_{f(t)\mathcal{V}}$ is the key to unlocking the existence of a smooth positive solution to ${L_{f(t)\mathcal{V}}}(\vphi)=0$, as such a solution exists if and only if the operator has lowest eigenvalue
\begin{align}
\lambda_0(t)=0.
\end{align}
This is seen as follows. First, by a standard maximum principal argument, any eigenfunction that corresponds to the lowest eigenvalue will not change sign, see \cite{Kazdan}.  Now, if there were two independent eigenfunctions corresponding to the lowest eigenvalue, we could make the pair orthogonal which is an impossibility if neither change signs, so the eigenspace corresponding to the lowest eigenvalue is one dimensional.  Suppose now, that for some $i>0$, the eigenvalue $\lambda_i(t)=0$, so the lowest eigenvalue $\lambda_0(t)<0$. Indeed, since $L_{f(t)\mathcal{V}}$ is self-adjoint, 
\begin{align}
0=\langle L_{f(t)\mathcal{V}}(\vphi_{\lambda_i(t)}),\vphi_{\lambda_0(t)})\rangle=\langle\vphi_{\lambda_i(t)}, {L_{f(t)\mathcal{V}}}(\vphi_{\lambda_0(t)})\rangle=\lambda_0(t)\int_M\vphi_{\lambda_i(t)}\vphi_{\lambda_0(t)}.
\end{align}
Therefore, since $\vphi_{\lambda_0(t)}$ is the lowest eigenfunction, it does not change sign which forces $\vphi_{\lambda_i(t)}$ to change signs, so $ L_{f(t)\mathcal{V}} (\vphi)= 0$ will not have a positive solution. 

It is now evident that we wish to determine exactly when zero is the lowest eigenvalue of $L_{f(t)\mathcal{V}}$. To this end, we will employ the continuity of $\lambda_0(t)$ in $t.$ Recall that a second order differential operator $\mathcal{D} = a^{ij}(x)\partial_{x_i} \partial_{x_j} + b^l(x) \partial_{x_l} + c(x)$ is of Laplace type if $a^{ij} = g^{ij}$. Clearly, the operator $L_{f(t)\mathcal{V}}$ is of Laplace type.
Any smooth one-parameter family of self-adjoint Laplace type operators, such as $L_{f(t)\mathcal{V}}$, have $k^{th}$ eigenvalue $\lambda_k(t)$ with continuous dependence on $t$, see \cite{Park,Blavic-Bokan-Gilkey} and references therein. The continuity of the lowest eigenvalue in $t$ will be essential to our work here.

The lowest eigenvalue of the operator $L_{f(t)\mathcal{V}}$ is found by minimizing the Rayleigh quotient 
\begin{align}
\label{rayleigh}
\lambda_0(t) =\min_{0\not\equiv\vphi\in H^1}\frac{\int_M|\nabla\vphi|^2+f(t)\mathcal{V}\vphi^2}{\int_M \vphi^2}
\end{align}
exactly as in the Euclidean case, see \cite{Kazdan}.  We will see in Section \ref{Schrodinger_proof} that the lowest eigenvalue problem in the case that $\mathcal{V}$ changes signs and has positive integral is highly non-trivial.  However, when the potential does not satisfy these conditions, it is elementary to determine the sign of the lowest eigenvalue as we see in the following remark.

\begin{remark}{\em
Finding the sign of the lowest eigenvalue of $L_{\widetilde{\mathcal{V}}}=-\Delta+\widetilde{\mathcal{V}}$ is trivial is the case that the potential $\widetilde{\mathcal{V}}$ has a sign
or changes sign on the manifold and has non-positive average.  Since constant functions lie in the Sobolev space $H^1$ of a compact manifold, assuming $\widetilde{\mathcal{V}}\not\equiv 0$, the lowest eigenvalue is strictly less than the Rayleigh quotient evaluated at a constant.  Therefore, when $\int_M\widetilde{\mathcal{V}}\leq0$, the lowest eigenvalue is negative.  This includes both when $\widetilde{\mathcal{V}}$ is non-positive and not identically zero, and when it changes sign but has non-positive average.  In the case that $\widetilde{\mathcal{V}}$ is non-negative and not identically zero, it is clear that the Rayleigh quotient, and therefore the lowest eigenvalue, is strictly positive.  These are all of the possible cases of the potential except for the setting of our focus, when the potential satisfies \eqref{potential_condition}.
It is important to highlight the implicit use of the compactness of the manifold, which ensures that constant functions are in the Sobolev space $H^1$. 

} 

\end{remark}

%%%%%%%%%%%%%%%%%%%%%%%%%%%%%%%%%%
%%%%%%%%%%%%%%%%%%%%%%%%%%%%%%%%%%%%%%%

\section{Proofs}
\label{Schrodinger_proof}

\subsection{Positive ground states}
\label{positive_lambda}
Here we prove Theorem \ref{t-schrodinger_thm_II}.
Any $\vphi\in H^1$ can be written as
\begin{align}
\begin{split}
\label{vphi_decomposition}
\vphi=&u+C_{\vphi},\phantom{=}\text{where}\\
\int_M u=0\phantom{=}&\text{and}\phantom{=}C_{\vphi}=\frac{1}{Vol(M)}\int_M\vphi.
\end{split}
\end{align}
Without loss of generality, we can assume that 
\begin{align}
\begin{split}
\label{f_wlog_assumption}
\int_M\vphi^2=&\int_M u^2+C_{\vphi}^2=1\\
\text{and}\phantom{========}\\
Vol(M)=&1,\\
\text{so}\phantom{========}\\
\int u^2=&1-C_{\vphi}^2.
\end{split}
\end{align}

Recall that the lowest eigenvalue of $L_{f(t)\mathcal{V}}$ is found by minimizing the Rayleigh quotient as in \eqref{rayleigh}.  From \eqref{vphi_decomposition} and \eqref{f_wlog_assumption}, the Rayleigh quotient becomes
\begin{align}
\label{f_rayleigh_positive_1}
\frac{\int_M|\nabla\vphi|^2+(f(t)\mathcal{V})\vphi^2}{\int_M \vphi^2}
=\int_M |\nabla u|^2+f(t)\mathcal{V}(u^2+2C_{\vphi}u+C_{\vphi}^2).
\end{align}
While the existence and uniqueness of a smooth minimizer, with unit $L^2$ norm, to the Rayleigh quotient is guaranteed, nothing is known about the actual function.  In particular, there are no qualities known that could help directly with the analysis of \eqref{f_rayleigh_positive_1}.  For instance, writing this minimizer as in \eqref{f_wlog_assumption}, nothing is known about the size of $C_{\vphi}^2$ versus $||u||_2^2$ within the unit bounds on each.  In turn, we must prove that there exists a range of $t>0$ for which the Rayleigh quotient \eqref{f_rayleigh_positive_1} is negative for any $0\leq C_{\vphi}\leq 1$ and all possible corresponding functions $u$.

Since $\int_M u=0$, it satisfies the Poincar\'e inequality
\begin{align}
\label{f_poincare}
\int_M u^2\leq P\int_M |\nabla u|^2,
\end{align}
where $P>0$ is the Poincar\'e constant which, on a compact manifold, is just the first nonzero eigenvalue of $-\Delta_g$.  Thus, by using \eqref{f_poincare}, the lower bound 
\begin{align}
\int_M |\nabla u|^2+(f(t)\mathcal{V})u^2\geq\int_M \Big(\frac{1}{P}+f(t)\mathcal{V}\Big)u^2
\end{align}
is obtained  on a component of the Rayleigh quotient \eqref{f_rayleigh_positive_1}.
Then, from \eqref{f_wlog_assumption} and an application of H\"older's inequality, we find that
\begin{align}
\begin{split}
\Big|\int_M (f(t)\mathcal{V})u\Big|\leq f(t)\int_M |\mathcal{V}u|&\leq f(t)||\mathcal{V}||_2||u||_2\\
&<f(t)||\mathcal{V}||_{\infty}||u||_2=f(t)||\mathcal{V}||_{\infty}\sqrt{1-C_{\vphi}^2},
\end{split}
\end{align}
from which the strict lower bound
\begin{align}
2C_{\vphi}f(t)\int_M\mathcal{V}\cdot u>-2f(t)||\mathcal{V}||_{\infty}C_{\vphi}\sqrt{1-C_{\vphi}^2}
\end{align}
on another component of the Rayleigh quotient \eqref{f_rayleigh_positive_1} is obtained.

Therefore, for any $0\leq C_{\vphi}\leq 1$, we find that \eqref{f_rayleigh_positive_1} satisfies the following sequence of inequalities:
\begin{align}
\begin{split}
\label{f_rayleigh_inequality}
\int_M |\nabla u|^2+&f(t)\mathcal{V}(u^2+2C_{\vphi}u+C_{\vphi}^2)\\
>& \int_M \Big(\frac{1}{P}+f(t)\mathcal{V}\Big)u^2-2f(t)||\mathcal{V}||_{\infty}C_{\vphi}\sqrt{1-C_{\vphi}^2}+C_{\vphi}^2f(t)\int_M\mathcal{V}\\
> & \Big(\frac{1}{P}-f(t)||\mathcal{V}||_{\infty}\Big)||u||_2^2-2f(t)||\mathcal{V}||_{\infty}C_{\vphi}\sqrt{1-C_{\vphi}^2}+C_{\vphi}^2f(t)\int_M \mathcal{V}\\
=&\Big(\frac{1}{P}-f(t)||\mathcal{V}||_{\infty}\Big)(1-C_{\vphi}^2)-2f(t)||\mathcal{V}||_{\infty}C_{\vphi}\sqrt{1-C_{\vphi}^2}+C_{\vphi}^2f(t)\int_M \mathcal{V}\\
=&\frac{1}{P}(1-C_{\vphi}^2)-f(t)||\mathcal{V}||_{\infty}\Big((1-C_{\vphi}^2)+2C_{\vphi}\sqrt{1-C_{\vphi}^2}\Big)+C_{\vphi}^2f(t)\int_M \mathcal{V}.
\end{split}
\end{align}
In order to prove this proposition, we will show that there exists some nonempty interval of $t$ for which the final expression in \eqref{f_rayleigh_inequality} is strictly positive.

Observe that the inequality
\begin{align}
\label{h_f_inequality_2}
\frac{1}{P}(1-C_{\vphi}^2)-f(t)||\mathcal{V}||_{\infty}\Big((1-C_{\vphi}^2)+2C_{\vphi}\sqrt{1-C_{\vphi}^2}\Big)\geq0
\end{align}
holds only whenever $t>0$ is such that the inequality
\begin{align}
\label{h_f_inequality_1}
f(t)\leq \frac{1}{P||\mathcal{V}||_{\infty}}\Big(1+\frac{2C_{\vphi}}{\sqrt{1-C_{\vphi}^2}}\Big)^{-1}
\end{align}
is satisfied.
Unfortunately, for any given $t>0$, there exists an $\epsilon>0$ so that the inequality \eqref{h_f_inequality_1}, and hence the inequality \eqref{h_f_inequality_2}, is violated for the range of constants $1-\epsilon<C_{\vphi}<1$.  Note though, that the inequality
\begin{align}
\label{h_f_inequality_6}
\frac{1}{P}(1-C_{\vphi}^2)-f(t)||\mathcal{V}||_{\infty}(1-C_{\vphi}^2)\geq0,
\end{align}
obtained by removing $-2f(t)||\mathcal{V}||_{\infty}C_{\vphi}\sqrt{1-C_{\vphi}^2}$ from \eqref{h_f_inequality_2},
holds for all $0\leq C_{\vphi}\leq1$ whenever $t$ is such that
\begin{align}
\label{h_f_inequality_7}
0\leq f(t)\leq\frac{1}{P\cdot||\mathcal{V}||_{\infty}},
\end{align}
with equality if and only if $C_{\vphi}=1$.  In a sense, what we will see, is that the term $-2f(t)||\mathcal{V}||_{\infty}C_{\vphi}\sqrt{1-C_{\vphi}^2}$ in \eqref{f_rayleigh_inequality} leads to quite a subtle difficulty.

In order to overcome this obstacle, and obtain the desired bound, we will show that a nonempty interval of $t>0$ exists so that, with respect to each $t$ in this interval, for $C_{\vphi}$ such that \eqref{h_f_inequality_1} is violated, the negativity of $-2f(t)||\mathcal{V}||_{\infty}C_{\vphi}\sqrt{1-C_{\vphi}^2}$ will be compensated for by the positivity of the $C_{\vphi}^2f(t)\int_M \mathcal{V}$ term that we have yet to utilize.  We will then show that, for this nonempty interval of $t$, the lowest eigenvalue is guaranteed to be positive.

To do this, we begin by finding for what range of $C_{\vphi}$ the inequality
\begin{align}
\label{h_f_inequality_3}
-2f(t)||\mathcal{V}||_{\infty}C_{\vphi}\sqrt{1-C_{\vphi}^2}+ C_{\vphi}^2f(t)\int_M  \mathcal{V}\geq 0
\end{align}
holds.  This range is precisely
\begin{align}
\label{h_f_inequality_4}
\sqrt{\frac{4}{\big(\frac{\int_M \mathcal{V}}{||\mathcal{V}||_{\infty}}\big)^2+4}}\leq C_{\vphi} \leq 1.
\end{align}
It is interesting to remark that this range is independent of $f(t)$, and the lower bound in \eqref{h_f_inequality_4} can be viewed as an invariant quantity of this entire family of Schr\"odinger operators.

Now, observe that there is a nonempty interval of $t>0$ such that inequality \eqref{h_f_inequality_1}, and hence inequality \eqref{h_f_inequality_2}, holds for
\begin{align}
\label{h_f_inequality_5}
0\leq C_{\vphi}\leq\sqrt{\frac{4}{\big(\frac{\int_M \mathcal{V}}{||\mathcal{V}||_{\infty}}\big)^2+4}},
\end{align}
for all $t$ in this interval.  This follows from the fact that $f(t)$ is continuous, $f(0)=0$, and $f(t)>0$ for $t>0$, which guarantees the existence of an interval of such $t>0$ near $t=0$.  Note, that there may be other $t$ away from this interval so that these conditions are satisfied as well, but such values of $t$ are only necessarily guaranteed to exist near $t=0$.  Also, note that \eqref{h_f_inequality_1} implies \eqref{h_f_inequality_7}, so \eqref{h_f_inequality_6} holds for all $t$ in this interval.

Finally, let $t^+>0$ be any element of this interval.  Then, since \eqref{h_f_inequality_1} and \eqref{h_f_inequality_2} are satisfied for $C_{\vphi}$ in the range \eqref{h_f_inequality_5}, from \eqref{h_f_inequality_3} and \eqref{h_f_inequality_4}, we see that the positivity of the term $C_{\vphi}^2f(t^+)\int_M \mathcal{V}$ compensates for the negativity assumed for the range of $C_{\vphi}$ where \eqref{h_f_inequality_1} is violated, and recall that in this range \eqref{h_f_inequality_6} holds.  Thus, for all $0\leq C_{\vphi}\leq 1$, we obtain the inequality
\begin{align}
\lambda_0(t^+)>\frac{1}{P}(1-C_{\vphi}^2)-f(t^+)||\mathcal{V}||_{\infty}\Big((1-C_{\vphi}^2)+2C_{\vphi}\sqrt{1-C_{\vphi}^2}\Big)+C_{\vphi}^2f(t^+)\int_M \mathcal{V}>0.
\end{align}

Since $f(t)$ is continuous and $f(0)=0$, certainly some component of the non-empty open subset of $t\in(0,\infty)$ for which the lowest eigenvalue is positive is contained near $t=0$.  When $f(t)$ is monotonically increasing, in particular, this set of $t$ is a connection open interval.  In other words, there exists some constant $T^+$ so that lowest eigenvalue of $L_{f(t)\mathcal{V}}$ is positive if and only if $t\in(0,T^+)$.  

The proof that $\lambda_0(t)>0$ if
\begin{align}
f(t)\leq\frac{\int_M\mathcal{V}}{P||\mathcal{V}||_{\infty}(4Vol(M)||\mathcal{V}||_{\infty}+\int_M\mathcal{V})}
\end{align}
follows almost immediately from the above once one removes the assumption that $Vol(M)=1$ and amends the proof accordingly.  This is because we want to extract a specific interval which is sensitive to rescalings whereas above we proved the existence of some nonempty interval.  Then, note that
\begin{align}
f(t)=\frac{\int_M\mathcal{V}}{P||\mathcal{V}||_{\infty}(4Vol(M)||\mathcal{V}||_{\infty}+\int_M\mathcal{V})}\end{align}
is the maximum value of $t$ for which the amended \eqref{h_f_inequality_1} holds for all $C_{\vphi}$ in the range the amended \eqref{h_f_inequality_5}.

\subsection{Zero ground states}
\label{zero_lambda}
Here we prove Theorem \ref{zero_eigenvalue}.
Recall, from Section \ref{Schrodinger_background}, that since
the operators $L_{f(t)\mathcal{V}}$ form a smooth one-parameter family of self-adjoint Laplace type operators, the $i^{th}$ eigenvalue of each, $\lambda_i(t)$, forms a continuous function of $t$.  In particular, the lowest eigenvalue $\lambda_0(t)$ is a continuous function of this parameter.  Therefore, to prove the existence of some $t^*>0$ so that $\lambda_0(t^*)=0$, it is only necessary to prove the existence of values $t^+,t^->0$ so that $\lambda_0(t^+)>0$ and $\lambda_0(t^-)<0$.  In Section \ref{positive_lambda}, we proved the existence of 
a positive lowest eigenvalue. 
%To guarantee the existence of a zero lowest eigenvalue, it is necessary to show that there is some $t^*>0$ so that the operator ${L_{f(t^*)\cdot\mathcal{V}}}$ has lowest eigenvalue $\lambda_0(t^*)=0$.  Recall, from Section \ref{Schrodinger_background}, that since
%the operators $L_{f(t)\cdot\mathcal{V}}$ form a smooth one-parameter family of self-adjoint Laplace type operators, the $i^{th}$ eigenvalue of each, $\lambda_i(t)$, forms a continuous function of $t$.  In particular, the lowest eigenvalue $\lambda_0(t)$ is a continuous function of this parameter. Therefore, to prove the existence of some $t^*>0$ so that $\lambda_0(t^*)=0$, it is only necessary to prove the existence of values $t^+,t^->0$ so that $\lambda_0(t^+)>0$ and $\lambda_0(t^-)<0$.  In Proposition \ref{f_+_prop} and Proposition \ref{f_-_prop}, we show that, in fact, there exists non-empty open intervals of such $t^+$ and $t^-$ respectively, thus proving the existence components of both Theorem \ref{Schrodinger_f_thm} and Theorem \ref{Schrodinger_pn_thm}.  The uniqueness results of each follow from Proposition \ref{f_uniqueness}.  Also, a further result on the interval of guaranteed positive lowest eigenvalue is provided in Corollary \ref{t-schrodinger_cor}.

We now give the existence result for an interval upon which the lowest eigenvalue of $L_{f(t)\mathcal{V}}$ is strictly negative.
\begin{proposition}
\label{f_-_prop}
There exists a nonempty interval of $t>0$ for which the lowest eigenvalue of the operator $L_{f(t)\cdot \mathcal{V}}$ is strictly negative.
\end{proposition}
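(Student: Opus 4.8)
The plan is to exploit the variational characterization of $\lambda_0(t)$ in \eqref{rayleigh} by plugging in a fixed, well-chosen trial function and showing the resulting Rayleigh quotient becomes negative for an appropriate range of $t$. The simplest choice is the constant function $\vphi \equiv 1$, which lies in $H^1$ precisely because $M$ is compact. With this trial function the Dirichlet energy $\int_M |\nabla \vphi|^2$ vanishes, so
\begin{align*}
\lambda_0(t) \leq \frac{\int_M |\nabla \vphi|^2 + f(t)\mathcal{V}\vphi^2}{\int_M \vphi^2} = \frac{f(t)\int_M \mathcal{V}}{Vol(M)}.
\end{align*}
But this is the wrong sign — condition \eqref{potential_condition}(ii) says $\int_M \mathcal{V} > 0$ — so the constant function alone does not do the job, which is exactly the subtlety the paper keeps emphasizing.

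Instead I would build a trial function concentrated on the region where $\mathcal{V} < 0$. Since $\mathcal{V}$ changes sign, the open set $\Omega^- = \{x \in M : \mathcal{V}(x) < 0\}$ is nonempty, so pick a nontrivial smooth bump function $\vphi_0$ supported in $\Omega^-$ with $\vphi_0 \geq 0$; then $\int_M \mathcal{V}\vphi_0^2 < 0$ is a fixed negative constant, while $\int_M |\nabla \vphi_0|^2$ and $\int_M \vphi_0^2$ are fixed positive constants, all independent of $t$. Plugging $\vphi_0$ into the Rayleigh quotient gives
\begin{align*}
\lambda_0(t) \leq \frac{\int_M |\nabla \vphi_0|^2 + f(t)\int_M \mathcal{V}\vphi_0^2}{\int_M \vphi_0^2}.
\end{align*}
The numerator is of the form $A - f(t)B$ with $A, B > 0$ fixed; since $f(t) \to \infty$ as $t \to \infty$ by \eqref{f_conditions}(iii), there is some $T$ with $f(t) > A/B$ for all $t > T$, and for such $t$ the numerator — hence $\lambda_0(t)$ — is strictly negative. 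By continuity of $\lambda_0(t)$ (or just directly), this negativity persists on an open interval, so $(T, \infty)$ — or any nonempty subinterval of it — is the desired set, giving an interval of $t > 0$ on which $\lambda_0(t) < 0$.

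The only genuine step requiring care is the construction of $\vphi_0$: one must verify that $\Omega^-$ is a nonempty open set (immediate from continuity of $\mathcal{V}$ and the sign-change hypothesis) and that a smooth compactly supported bump with the stated properties exists inside it (standard, via a coordinate chart and a mollifier). Everything else is a one-line consequence of the Rayleigh characterization and the hypothesis $f(t) \to \infty$; I do not anticipate any real obstacle. It may also be worth noting that this argument, unlike the proof of Theorem \ref{t-schrodinger_thm_II}, does not use the Poincar\'e inequality or any delicate decomposition of the trial function — the difficulty in this direction is entirely absent, which matches the remark in the introduction that tuning $t$ large to force negativity is easy.
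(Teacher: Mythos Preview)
Your proposal is correct and is essentially identical to the paper's own proof: both pick a smooth bump function supported in the set where $\mathcal{V}<0$, observe that the Rayleigh quotient numerator has the form $A+f(t)C$ with $A>0$ and $C<0$ fixed, and then use $f(t)\to\infty$ to drive it negative. Your write-up is in fact a bit more careful (you explicitly exhibit the interval $(T,\infty)$ and address the existence of the bump), but the underlying idea is the same.
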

\begin{proof}

Since $\mathcal{V}$ changes sign, there is a subset of $M$ on which $\mathcal{V}$ is strictly negative.  Specifically, this subset $M^-\subset M$ is defined as
\begin{align}
M^-:=\{x\in M: \mathcal{V}(x)<0\}.
\end{align}
Now, choose any $0\not\equiv\vphi\in C^{\infty}_0(M^-)$, and consider the associated signed values
\begin{align}
\begin{split}
C_1(\vphi)&=\int_{M}|\nabla \vphi|^2=\int_{M^-}|\nabla \vphi|^2>0\\
C_2(\vphi)&=\int_{M}\mathcal{V} \cdot\vphi^2=\int_{M^-}\mathcal{V} \cdot\vphi^2<0.
\end{split}
\end{align}
Clearly, both $C_1(\vphi)$ and $C_2(\vphi)$ are finite since $\vphi\in C^{\infty}_0(M^-)$ and $M$ is compact.
Therefore, since $\lim_{t\rightarrow 1}f(t)=\infty$, there exists some $t^->0$ so that
\begin{align}
C_1(\vphi)+f(t^-)C_2(\vphi)<0.
\end{align}
and, because the lowest eigenvalue is the minimizer of the Rayleigh quotient \eqref{rayleigh}, we see that $\lambda_0(t^-)<0$.  
\end{proof}

Part \eqref{zero_existence} and part \eqref{part2} of Theorem \ref{zero_eigenvalue} follow from Theorem \ref{t-schrodinger_thm_II} and
Proposition \ref{f_-_prop} by using continuity for the lowest eigenvalues of the one-parameter family of operators $L_{f(t)\mathcal{V}}$.  Lastly, we prove part \eqref{part3}, the uniqueness statements.

\begin{proposition}
\label{f_uniqueness}
Let $f(t)$ be a strictly monotone increasing function.  Then, there is a unique $t^*>0$ for which the equation
\begin{align*}
L_{f(t)\cdot \mathcal{V}}(\phi)=0
\end{align*}
has a smooth positive solution which is itself unique.
\end{proposition}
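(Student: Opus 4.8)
The plan is to reduce the statement to a monotonicity property of the lowest eigenvalue $\lambda_0(t)$. Recall from Section \ref{Schrodinger_background} that $L_{f(t)\mathcal{V}}(\phi)=0$ admits a smooth positive solution if and only if $\lambda_0(t)=0$, and that when it does, the corresponding eigenspace is one-dimensional, so the solution is unique up to scaling; hence the ``uniqueness of the solution'' half of the proposition is automatic, and it remains only to show that $\lambda_0$ vanishes at exactly one $t^*>0$. Existence of at least one such $t^*$ is immediate (and does not use monotonicity of $f$): by Theorem \ref{t-schrodinger_thm_II} we have $\lambda_0(t)>0$ for $t>0$ near $0$, by Proposition \ref{f_-_prop} there is a $t^->0$ with $\lambda_0(t^-)<0$, and $\lambda_0$ is continuous in $t$, so the intermediate value theorem produces a zero. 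The real content is uniqueness, which is where strict monotonicity of $f$ enters.

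For uniqueness I would establish the following claim: if $0<t_1<t_2$ and $\lambda_0(t_1)\le 0$, then $\lambda_0(t_2)<0$. Granting this, if $t_1<t_2$ were two zeros of $\lambda_0$ we would get $0=\lambda_0(t_2)<0$, a contradiction; so the zero is unique. To prove the claim, let $\vphi_1$ be a ground state of $L_{f(t_1)\mathcal{V}}$ normalized so that $\int_M\vphi_1^2=1$, whence by \eqref{rayleigh}
\begin{align*}
\lambda_0(t_1)=\int_M|\nabla\vphi_1|^2+f(t_1)\int_M\mathcal{V}\vphi_1^2\le 0.
\end{align*}
The crucial point is that $\int_M\mathcal{V}\vphi_1^2<0$. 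If $\lambda_0(t_1)<0$ this is clear from the identity above, since $\int_M|\nabla\vphi_1|^2\ge 0$. If $\lambda_0(t_1)=0$, then $\vphi_1$ cannot be constant: a constant test function makes the Rayleigh quotient \eqref{rayleigh} equal to $f(t_1)\big(\int_M\mathcal{V}\big)/Vol(M)>0$, using \eqref{potential_condition}(ii) and $f(t_1)>0$, which contradicts $\lambda_0(t_1)=0$; hence $\int_M|\nabla\vphi_1|^2>0$, and the identity forces $\int_M\mathcal{V}\vphi_1^2=-\tfrac{1}{f(t_1)}\int_M|\nabla\vphi_1|^2<0$. Now testing the Rayleigh quotient at parameter $t_2$ with $\vphi_1$ and using $f(t_2)>f(t_1)$ together with $\int_M\mathcal{V}\vphi_1^2<0$,
\begin{align*}
\lambda_0(t_2)\le\int_M|\nabla\vphi_1|^2+f(t_2)\int_M\mathcal{V}\vphi_1^2<\int_M|\nabla\vphi_1|^2+f(t_1)\int_M\mathcal{V}\vphi_1^2=\lambda_0(t_1)\le 0,
\end{align*}
which is the claim.

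I expect the only delicate step to be exactly the assertion $\int_M\mathcal{V}\vphi_1^2<0$ at a parameter where $\lambda_0=0$, i.e.\ ruling out the degenerate possibility that the ground state is a constant function; this is precisely where hypothesis \eqref{potential_condition}(ii) is used in an essential way, and the strict monotonicity of $f$ is what upgrades the resulting inequality to a strict one. Everything else is the variational characterization \eqref{rayleigh} of $\lambda_0$ and the intermediate value theorem applied to the continuous function $\lambda_0(t)$. As a byproduct, the same comparison argument shows that $\lambda_0(t)<0$ for all $t>t^*$, and combining this with continuity and positivity of $\lambda_0$ near $0$ shows $\lambda_0(t)>0$ for $t\in(0,t^*)$; this yields $I^+=(0,t^*)$ and $I^-=(t^*,\infty)$ as in part \eqref{part3} of Theorem \ref{zero_eigenvalue}.
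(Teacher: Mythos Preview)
Your proof is correct and follows essentially the same route as the paper: both show that at any $t^*$ with $\lambda_0(t^*)=0$ the ground state is non-constant, hence $\int_M\mathcal{V}\vphi_{\lambda_0(t^*)}^2<0$, and then use $\vphi_{\lambda_0(t^*)}$ as a test function together with $f(t')>f(t^*)$ to force $\lambda_0(t')<0$ for all $t'>t^*$. Your version is slightly more detailed in justifying non-constancy (via \eqref{potential_condition}(ii)) and formulates the key step as the marginally stronger claim ``$\lambda_0(t_1)\le 0\Rightarrow\lambda_0(t_2)<0$ for $t_2>t_1$,'' but the argument is the same.
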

\begin{proof}
In the proof of Theorem \ref{zero_eigenvalue} part \eqref{zero_existence}, it was shown that there exists at least one $t^*>0$ with $\lambda_0(t^*)=0$.  It will now be shown that, given the strict monotonicity condition on $f(t)$, there is exactly one such $t^*$.

This is seen as follows.  Let $t^*>0$ be such that $\lambda_0(t^*)=0$ and denote the associated positive eigenfunction with unit $L^2$ norm by $\vphi_{\lambda_0(t^*)}$.  Then
\begin{align}
\int_M|\nabla\vphi_{\lambda_0(t^*)}|^2+(f(t^*)\cdot\mathcal{V})\vphi_{\lambda_0(t^*)}^2=0,
\end{align}
so $\int_M(f(t^*)\cdot\mathcal{V})\vphi_{\lambda_0(t^*)}^2<0$ since $\int_M|\nabla\vphi_{\lambda_0(t^*)}|^2>0$ because $\vphi_{\lambda_0(t^*)}$ is non-constant given that $t^*>0$.  

Thus, for any $t'>t^*$, the inequality
\begin{align}
\lambda_0(t')=\int_M|\nabla \vphi_{\lambda_0(t')}|^2+(f(t')\cdot\mathcal{V})\vphi_{\lambda_0(t')}<\int_M|\nabla \vphi_{\lambda_0(t^*)}|^2+(f(t')\cdot\mathcal{V})\vphi_{\lambda_0(t^*)}^2<0
\end{align}
holds, so beyond $t^*$ the lowest eigenvalue remains strictly negative from which we see that there is a unique $t^*$ with $\lambda_0(t^*)=0$.

\end{proof}

\bibliography{sKlsc_references}

\end{document}